\theoremstyle{plain}
\newtheorem{proposition}{Proposition}
\newcommand{\inv}{^{-1}}
\newcommand{\diag}{\mathrm{diag}}
\newcommand{\fG}{\mathfrak{G}}
\newcommand{\fH}{\mathfrak{H}}
\newcommand{\bbF}{\mathbb{F}}
\newcommand{\bfM}{\mathbf{M}}
\newcommand{\wt}[1]{\widetilde{#1}}
\newcommand{\cmpl}[1]{%
    \sbox\z@{$#1$}%
    \dimen@=\wd\z@
    \advance \dimen@ -\strip@pt\fontdimen\@ne\textfont\@ne \ht\z@
    \setbox\tw@=\hb@xt@\dimen@{}%
    \ht\tw@=\ht\z@ \dp\tw@=\dp\z@
    \box\z@
    \llap{$\overline{\box\tw@}$}%
} %Written by Lars Hellstr\"om. June, 2001.
\begin{document}

\title{On the Counterexamples to the Unit Conjecture for Group Rings}

\author{D. S. Passman}
\address{Department of Mathematics, University of Wisconsin-Madison, Madison,
Wisconsin 53706, passman@math.wisc.edu}

\begin{abstract}
We offer two comments on the beautiful papers of Giles Gardam and Alan Murray
that yield counterexamples to the Kaplansky unit conjecture. First we discuss the determinants of these units
in a certain $4\times 4$ matrix representation of the group ring. Then we explain why
there is a doubly infinite family of units in the Murray paper.
\end{abstract}

\maketitle

Let $\mathfrak{G}=\langle a,b \mid (a^2)^b=a^{-2}, \,(b^2)^a= b^{-2} \rangle$. Then we know that
$\mathfrak{G}$ is a torsion-free group  with a normal abelian subgroup
$\fH$ of index 4 and with $\fG/\fH$ a fours group. The paper \cite{G} offers an example
of a nontrivial unit in the group algebra $\bbF_2[\fG]$ where $\bbF_2=\mathrm{GF}(2)$. Building on
that, \cite{M} offers a doubly infinite family of nontrivial units in $\bbF_d [\fG]$ for any prime $d$
where $\bbF_d=\mathrm{GF}(d)$. Of course a unit is nontrivial if it is not a scalar multiple of an
element of $\fG$.

Now it is a standard fact that $\bbF_d[\fG]$ embeds in the $4\times 4$ matrix ring over $\bbF_d[\fH]$.
Indeed write $\mathbb{V} =\bbF_d[\fG]$. Then $\mathbb{V}$ is a faithful right $\bbF_d[\fG]$-module
via right multiplication and $\mathbb{V}$ is a free left $\bbF_d[\fH]$-module via left multiplication where
the coset representatives $1,a,b,c=ab$ of $\fH$ in $\fG$ yield a free basis for $\mathbb{V}$.
Since right and left multiplication commute as operators on $\mathbb{V}$, it follows that
$\bbF_d[\fG]$ embeds in the $\bbF_d[\fH]$-endomorphisms of $\mathbb{V}$, namely $\bfM_4(
\bbF_d[\fH])$. Of course, a similar argument holds for any group $\fG$ and any subgroup $\fH$
of finite index, normal or not.

In our situation, $\fH$ is the free abelian group on $x=a^2, y=b^2$ and $z=c^2$.
Thus $\bbF_d[\fH] = \mathcal{L}_d(x,y,z)$, the Laurent polynomial ring in variables $x,y,z$ over $\bbF_d$,
and thus $\bbF_d[\fG]$ embeds in $\bfM_4(\mathcal{L}_d(x,y,x))$. Using capital letters for the matrices corresponding
to the generators of $\fG$, we have

\[ A=\left[ \begin{matrix} 0&1&0&0\\x&0&0&0\\0&0&0&x\inv y z\inv\\0&0& y\inv z&0 \end{matrix}\right] \qquad
B=\left[ \begin{matrix} 0&0&1&0\\0&0&0&1\\ y&0&0&0\\ 0&y\inv &0&0 \end{matrix} \right]
\]
\[ C=AB = \left[\begin{matrix} 0&0&0&1\\0&0&x&0\\ 0&x\inv z\inv&0&0\\z&0&0&0 \end{matrix} \right] \]
\[ X=A^2=\diag(x,x,x\inv,x\inv) \qquad Y=B^2 =\diag(y,y\inv,y,y\inv)\]
\[ Z= C^2 =\diag(z,z\inv,z\inv,z) \]

Following \cite{M} we choose a prime characteristic $d$ and two integer parameters $t$ and $w$
and, using $I$ for the identity matrix, we define the diagonal matrices
\[ H = (I - Z^{1-2t})^{d-2}\]
\[ P= (I+X)(I+Y)(Z^t+Z^{1-t}) H\]
\[ Q=Z^w[(I+X)(X\inv+Y\inv) +(I+Y\inv)(I+Z^{2t-1})]H \]
\[ R= Z^w[(I+Y\inv)(X+Y)Z^t +(I+X)(Z^t+Z^{1-t})]H \]
\[ S= Z^{2t-1} + (4I +X+X\inv +Y+Y\inv) H\]
These, of course, naturally correspond to elements of $\bbF_d[\fH]$ and thus
\[ U= P+QA+RB+SAB= P+QA+RB+SC\]
corresponds to an element of $\bbF_d[\fG]$. Indeed, it is shown in \cite{M} that
$U$ corresponds to a nontrivial unit of the group ring with inverse corresponding to a specific matrix of the
form
\[ U'=P'+Q'A+R'B+S'C\]
Note that, if $d=2$ then $H=I$, but $H$ is a nontrivial polynomial in $Z$ for $d>2$.

Now using any computer algebra system, it is an easy task to describe $U$ and $U'$ for any set of
parameters. However even for relatively small $d$, these matrices look unbelievably complicated and
fill numerous computer screens. But when we multiply $UU'$ and $U'U$ it is satisfying that we obtain
the $4\times 4$ identity matrix $I$. These computations verify the assertions in \cite{G} and \cite{M}
at least for the specific set of parameters. What is surprising in these computations,
for all the parameters we could check, is that
the determinants of $U$ and $U'$ always seem to be equal to 1. It is not clear why this should be,
but it is easily provable and we do so below. Note that $\det A=1$ and $\det B=1$ so all elements
of $\fG$ have matrices of determinant 1.

We remark that for general finite $\fG$ and $\fH$,
it is known that the determinant of this matrix representation is related to the group theoretic transfer map.

\begin{proposition}
For all parameters $d,t,w$, we have $\det U=\det U'=1$.
\end{proposition}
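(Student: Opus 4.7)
The plan is to extract three constraints on $\det U$ and combine them. First, since $U$ is a unit of $\bbF_d[\fG]$ with inverse $U'$, its image is invertible in $\bfM_4(\bbF_d[\fH])$, so $\det U$ is a unit of the Laurent polynomial ring $\bbF_d[\fH]=\mathcal{L}_d(x,y,z)$; hence $\det U = \alpha\, x^i y^j z^k$ for some $\alpha \in \bbF_d^\times$ and $i,j,k \in \mathbb{Z}$.

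The crux is that $\det U$ must be fixed by the natural conjugation action of $\fG/\fH$ on $\bbF_d[\fH]$. For $g \in \fG$, let $\sigma_g$ denote the automorphism $h \mapsto g h g\inv$ of $\bbF_d[\fH]$, extended entry-wise to an automorphism $\wt{\sigma}_g$ of $\bfM_4(\bbF_d[\fH])$. Replacing the left $\bbF_d[\fH]$-basis $\{1,a,b,c\}$ of $\mathbb{V}$ by the translated basis $\{g,ga,gb,gc\}$ changes the matrix of right multiplication by any $u \in \bbF_d[\fG]$ from $\rho(u)$ to $\wt{\sigma}_g(\rho(u))$, while the corresponding change-of-basis matrix $N$ is monomial (its nonzero entries are the $\fH$-parts of the elements $gt_i$) and therefore invertible. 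Thus $\wt{\sigma}_g(\rho(u)) = N\inv\rho(u)\, N$, and taking determinants yields $\sigma_g(\det\rho(u))=\det\rho(u)$ for every $u$ and every $g$.

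Applied to our unit: since $a$ fixes $x$ and inverts $y,z$, invariance of $\alpha x^i y^j z^k$ under $\sigma_a$ forces $j=k=0$; invariance under $\sigma_b$ (which inverts $x$ and $z$) then forces $i=0$. So $\det U = \alpha \in \bbF_d^\times$. To pin down $\alpha$, I evaluate at $x=y=z=1$. The factor $H=(I-Z^{1-2t})^{d-2}$ equals $I$ when $d=2$ and $0$ otherwise; in either case a direct inspection shows $P$, $Q$, and $R$ vanish while $S$ reduces to $I$, so $U|_{x=y=z=1}=C|_{x=y=z=1}$, the permutation matrix of the double transposition $(1\,4)(2\,3)$, whose determinant is $1$. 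Hence $\alpha=1$, giving $\det U = 1$; and $\det U'=1$ then follows from $UU'=I$.

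The main obstacle is the similarity identity $\wt{\sigma}_g(\rho(u))=N\inv\rho(u)\,N$, essentially the assertion that the representation of $\fG$ on $\mathbb{V}$ as a left $\bbF_d[\fH]$-module is covariant under left translation by $\fG$; once this is in hand, everything else is routine bookkeeping.
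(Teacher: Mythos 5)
Your proof is correct, but it reaches the conclusion by a genuinely different route from the paper at the decisive step. Both arguments start the same way: $UU'=I$ forces $\det U$ to be a unit of $\mathcal{L}_d(x,y,z)$, hence a monomial $\alpha x^iy^jz^k$. The paper then pins down the monomial by two specializations, $z\mapsto 1$ (which for $d=2$ requires a separate verification, possibly by computer, that the resulting matrix has determinant $1$) and $x,y\mapsto -1$. You instead prove an invariance lemma: the matrices of right multiplication by $u$ with respect to the translated bases $\{g,ga,gb,gc\}$ are simultaneously conjugate to $\rho(u)$ and equal to the entrywise twist $\wt{\sigma}_g(\rho(u))$, so $\sigma_g(\det\rho(u))=\det\rho(u)$ for every $g\in\fG$ and every $u\in\bbF_d[\fG]$; this is valid (normality of $\fH$ makes $\sigma_g$ an automorphism of $\bbF_d[\fH]$, and conjugate matrices over the commutative ring $\mathcal{L}_d(x,y,z)$ have equal determinants). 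Since $\sigma_a$ fixes $x$ and inverts $y,z$ while $\sigma_b$ inverts $x$, this kills $i,j,k$ in one stroke, and a single augmentation $x=y=z=1$ then gives $\alpha=\det C|_{x=y=z=1}=1$; your check that $P,Q,R\mapsto 0$ and $S\mapsto I$ there is right in both characteristics (for $d=2$ each summand of $P,Q,R$ picks up a factor $I+I=0$ under augmentation and $4I+X+X\inv+Y+Y\inv\mapsto 8I=0$). Your invariance step is precisely the transfer-type phenomenon the paper alludes to in its remark about general finite-index subgroups; it buys a more conceptual proof, a statement valid for $\det\rho(u)$ for arbitrary $u$, and the elimination of the computational check at $d=2$, while the paper's two-specialization argument buys a shorter, completely self-contained computation using nothing but evaluation homomorphisms.
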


\begin{proof}
Fix a set of parameters. We ignore the group ring, but rather we work in the $4\times 4$ matrix ring over
the Laurent polynomial ring in $x,y,z$. Now $UU'=I$, so $\det U$ is a unit in $\mathcal{L}_d(x,y,z)$
with inverse $\det U'$. Thus $\det U= fx^iy^jz^k$ for some $0\neq f\in\bbF_d$ and integers $i,j,k$.

Consider the homomorphism $\cmpl{\phantom{x}}\colon \mathcal{L}_d(x,y,z)\to \mathcal{L}_d(x,y)$
given by $x\mapsto x$, $y\mapsto y$ and $z\mapsto 1$, and extend this to the corresponding
$4\times 4$ matrix rings. Then $\cmpl Z = I$. If $d>2$, then $\cmpl H=0$ so $\cmpl P = \cmpl Q =\cmpl R=0$
and $\cmpl S = I$. Thus $\cmpl U = \cmpl A \cmpl B$ has determinant 1. On the other hand, if $d=2$
then $\cmpl H = I$ and $\cmpl Z = I$, so $\cmpl U$ is independent of the parameters $w$ and $t$
and it is easy to check 
(using a computer algebra system, if necessary)
that $\det \cmpl U =1$ in this case also. Since $\det \cmpl U$ is the image of
$f x^iy^jz^k$ under $\cmpl{\phantom{x}}$, we see that $f x^iy^j=1$, so $f=1$ and $i=j=0$.
in other words, $\det U = z^k$.

Now consider the homomorphism $\tilde{\phantom{x}}\colon \mathcal{L}_d(x,y,z)\to \mathcal{L}_d(z)$
given by $x\mapsto -1$, $y\mapsto -1$ and $z\mapsto z$, and extend this to the corresponding
$4\times 4$ matrix rings. Then $\wt{X}=\wt{X}\inv=\wt{Y} =\wt{Y}\inv=-I$, so $P$, $Q$ and $R$ map
to 0 and $\wt{S}=\wt{Z}^{2t-1}$. It follows that $\wt{U} =\wt{Z}^{2t-1} \wt{C}$ has determinant 1.
But $\det\wt{U}$ is the image of $z^k$ under $\tilde{\phantom{x}}$, so we see that $z^k=1$ and $k=0$,
as required.
\end{proof}

Now let us return to the group algebra and let $u_0$ be the Murray nontrivial unit given by $t=0$ and $w=0$.
Then $u_0=p_0+ q_0 a +r_0 b +s_0 ab$ where $h_0=(1-z)^{d-2}$
\[p_0=(1+x)(1+y)(1+z)h_0\]
\[q_0=[(1+x)(x\inv+y\inv) +(1+y\inv)(1+z\inv)]h_0\]
\[r_0 = [(1+y\inv)(x+y)+(1+x)(1+z)] h_0\]
\[ s_0= z\inv +(4+x+x\inv +y+y\inv) h_0\]
The next question to ask is why is there a doubly infinite family of such nontrivial units. The answer
here is fairly easy, namely these units correspond to a doubly infinite family of endomorphisms
of $\fG$. Specifically

\begin{proposition}
Fix the characteristic $d$ and let $u$ be the nontrivial unit of $\bbF_d[\fG]$ corresponding to the
parameters $t$ and $w$. Then $u= z^t \sigma(u_0)$ where $\sigma$ is the endomorphism of
$\bbF_d[\fG]$ determined by the group endomorphism $\sigma\colon\fG\to\fG$ given by
$a\mapsto z^w z^{-t} a$ and $b\mapsto z^w b$.
\end{proposition}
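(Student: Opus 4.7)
The plan is to verify the proposition by a direct computation in $\bbF_d[\fG]$, proceeding in four stages.

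First, I would confirm that $\sigma$ is a well-defined group endomorphism, which requires checking that $\sigma(a)=z^{w-t}a$ and $\sigma(b)=z^w b$ satisfy the defining relations $b\inv a^2 b = a^{-2}$ and $a\inv b^2 a = b^{-2}$. The one non-obvious input is how $\fG$ conjugates $z$: the matrix $Z=\diag(z,z\inv,z\inv,z)$ gives $A\inv Z A = Z\inv$ (and analogously $B\inv Z B = Z\inv$, while $C\inv Z C = Z$), so $z^a = z^b = z\inv$ and $z^c = z$. Using $az = z\inv a$ and $bz = z\inv b$, I would compute $\sigma(a)^2 = z^{w-t}a \mdot z^{w-t}a = z^{w-t}z^{-(w-t)} a^2 = x$ and likewise $\sigma(b)^2 = y$; since $z^{-w}xz^w = x$ and $z^{-(w-t)}y z^{w-t} = y$ inside the abelian group $\fH$, the two relations reduce to $b\inv x b = x\inv$ and $a\inv y a = y\inv$, which are the original relations of $\fG$.

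Second, I would extend $\sigma$ to an $\bbF_d$-algebra endomorphism of $\bbF_d[\fG]$ and record its action on $\fH$. From the preceding step $\sigma(x) = x$ and $\sigma(y) = y$. For the third generator, $\sigma(c) = \sigma(a)\sigma(b) = z^{w-t}a\mdot z^w b = z^{w-t}z^{-w}c = z^{-t}c$ (using $az^w = z^{-w}a$), and therefore $\sigma(z) = \sigma(c)^2 = z^{-t}(cz^{-t}c\inv)c^2 = z^{-t}z^{-t}z = z^{1-2t}$ since $c$ commutes with $z$. In particular $\sigma(h_0) = (1-z^{1-2t})^{d-2} = h$.

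Third, I would expand $\sigma(u_0) = \sigma(p_0) + \sigma(q_0)\sigma(a) + \sigma(r_0)\sigma(b) + \sigma(s_0)\sigma(c)$ in the free $\bbF_d[\fH]$-basis $\lset 1,a,b,c\rset$. Because $\fH$ is abelian, the factors $z^{w-t}$, $z^w$, $z^{-t}$ coming from $\sigma(a),\sigma(b),\sigma(c)$ commute through the polynomial coefficients, yielding
\[
\sigma(u_0) = \sigma(p_0) + z^{w-t}\sigma(q_0)\mdot a + z^w\sigma(r_0)\mdot b + z^{-t}\sigma(s_0)\mdot c.
\]
Multiplying on the left by $z^t$ (which lies in $\bbF_d[\fH]$) then gives
\[
z^t\sigma(u_0) = z^t\sigma(p_0) + z^w\sigma(q_0)\mdot a + z^{t+w}\sigma(r_0)\mdot b + \sigma(s_0)\mdot c.
\]

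Finally, I would compare these four $\bbF_d[\fH]$-coefficients with $p,q,r,s$. Since $\sigma$ acts on $p_0,q_0,r_0,s_0$ by the substitution $z\mapsto z^{1-2t}$, each comparison reduces to a short Laurent-polynomial identity: $z^t(1+z^{1-2t}) = z^t + z^{1-t}$ gives the coefficient of $1$; the coefficient of $a$ matches directly because $\sigma(z\inv) = z^{2t-1}$; distributing $z^{t+w}$ in the $b$-coefficient produces exactly the form $z^w[(1+y\inv)(x+y)z^t + (1+x)(z^t+z^{1-t})]h$ defining $r$; and the coefficient of $c$ is immediate from $z^t z^{-t}=1$. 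The main obstacle is the verification in the first stage that $\sigma$ actually respects the two defining relations of $\fG$, which is where the non-central action of $\fG$ on $z$ has to be used carefully; once that is settled, the remainder is a controlled shuffling of powers of $z$.
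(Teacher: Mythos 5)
Your proposal is correct and follows essentially the same route as the paper: define $\sigma$ on the generators, verify the defining relations using the fact that $a$ and $b$ invert $z$, compute $\sigma$ on $x,y,c,z$ (hence on $h_0$), and then match the four $\bbF_d[\fH]$-coefficients of $z^t\sigma(u_0)$ with $p,q,r,s$ term by term. The only cosmetic difference is that you justify $z^a=z^b=z\inv$ via the matrix representation while the paper asserts it group-theoretically, and the paper additionally remarks that $\sigma$ is injective (so it carries nontrivial units to nontrivial units), which is not needed for the identity itself.
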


\begin{proof}
Let $t$ and $w$ be integers and define group elements 
in $\fG$ by $\cmpl a =z^w z^{-t} a$ and $\cmpl b = z^w b$. Since $a$ and $b$ invert $z$
by conjugation, it follows that $\cmpl a^2 = a^2=x$ and $\cmpl b^2 = b^2=y$. Furthermore,
$\cmpl c = \cmpl a \cmpl b = (z^w z^{-t} a) (z^w b) = z^{-t} c$ so $\cmpl c^2 = z^{1-2t} =\cmpl z$. Now
$(\cmpl a^2) ^{\cmpl b} = (\cmpl a)^{-2}$ and $(\cmpl b^2)^{\cmpl a} =(\cmpl b)^{-2}$, so it follows
from the definition of $\fG$ that there exists a homomorphism $\sigma\colon \fG \to \fG$
with $\sigma(a)=\cmpl a$ and $\sigma(b)=\cmpl b$. Note that $\sigma(x) =\sigma(a)^2=x$,
$\sigma(y)=\sigma(b)^2=y$, $\sigma(c) =\cmpl c= z^{-t} c$ and $\sigma(z)=\cmpl c^2= z^{1-2t}$.
It follows that $\sigma\colon \fH\to \fH$ is one-to-one, but not necessarily onto, and then
$\sigma\colon \fG\to \fG$ is also one-to-one, but not necessarily onto.
Of course $\sigma$ extends to an algebra homomorphism $\sigma\colon \bbF_d[\fG] \to
\bbF_d[\fG]$. In particular, $\sigma$ sends units to units and, since $\sigma$ is one-to-one on $\fG$,
it sends nontrivial units to nontrivial units.

Now let $u=p+qa+rb+sc$ be the unit associated with $t$ and $w$. We compute $z^t \sigma(u_0)$
as follows. First $h_0=(1-z)^{d-2}$ so $\sigma(h_0) = (1-z^{1-2t})^{d-2} =h$. Next
$p_0=(1+x)(1+y)(1+z)h_0$ so 
\[z^t\sigma(p_0) =z^t (1+x)(1+y)(1+z^{1-2t}) h = p\]  and
$q_0= [(1+x)(x\inv+y\inv) +(1+y\inv)(1+z\inv)] h_0$ so
\[ z^t\sigma(q_0 a) = z^t z^wz^{-t}[(1+x)(x\inv+y\inv)+(1+y\inv)(1+z^{2t-1})] ha =q a\]
Similarly $r_0 = [(1+y\inv)(x+y)+(1+x)(1+z)] h_0$ so
\[z^{t}\sigma(r_0 b)=z^t z^w [(1+y\inv)(x+y) +(1+x)(1+z^{1-2t})] hb=rb\]
and $s_0= z\inv +(4+x+x\inv +y+y\inv) h_0$ so
\[ z^t \sigma(s_0c)=z^t z^{-t} [z^{2t-1}+(4+x+x\inv +y+y\inv) h] c= sc\]
We conclude that $z^t \sigma(u_0)=u$ and the proposition is proved.
\end{proof}

Of course, $\fG$ admits automorphisms that permute the cosets $\fH a$, $\fH b$ and $\fH c$
transitively. Furthermore, there are additional endomorphisms that fix the cosets of $\fH$, and
all of these yield additional nontrivial units in $\bbF_d[\fG]$. Finally a close look at the last paragraph of the
proof of \cite[Theorem 3]{M} shows that if we define
\begin{align*} h_0(z) &= \frac{1-z^{nd}}{(1-z)^2} =\frac{(1-z^d)}{(1-z)^2} (1+z^d+z^{2d} +\cdots+ z^{(n-1)d} )\\
&=(1-z)^{d-2}(1+z^d+z^{2d} +\cdots+ z^{(n-1)d} )
\end{align*}
for any integer $n\geq 1$,
then the expression for $h$ in Theorem 3 can be replaced by
\[  h = h_0(z^{1-2t})\in \bbF_d[\fG]\]
In this way, for any fixed characteristic $d>0$, by varying $n$, $t$, and $w$, we obtain a triply infinite family of counterexamples.
 Note that 
Propositions 1 and 2 above apply equally well to this more general situation although a small amount of additional work is needed
when $d=2$ in Proposition 1. Namely here we must observe that $\cmpl H$ could equal either $0$ or $I$ depending on whether
$n$ is even or odd.


\begin{thebibliography}{M}


\bibitem[G]{G}
Gardam, Giles, \emph{A Counterexample to the Unit Conjecture for Group Rings},
arXiv:\ 2102.11818v3 [math.GR] 14 Apr 2021.


\bibitem[M]{M}
Murray, Alan G., \emph{More Counterexamples to the Unit Conjecture for Group Rings},
arXiv:\ 2106.02147v1 [math.RA] 3 Jun 2021.

\end{thebibliography}
\end{document}